\documentclass[12pt]{article}
\oddsidemargin 0 mm
\topmargin -10 mm
\headheight 0 mm
\headsep 0 mm 
\textheight 246.2 mm
\textwidth 159.2 mm
\footskip 9 mm
\setlength{\parindent}{0pt}
\setlength{\parskip}{5pt plus 2pt minus 1pt}
\pagestyle{plain}
\usepackage{amssymb}
\usepackage{amsthm}
\usepackage{amsmath}
\usepackage{graphicx}
\usepackage{enumerate}

\newtheorem{theorem}{Theorem}%[section]
\newtheorem{definition}[theorem]{Definition}
\newtheorem{lemma}[theorem]{Lemma}

\newtheorem{example}[theorem]{Example}

\title{How to introduce the connective implication in orthomodular posets}
\author{Ivan~Chajda and Helmut~L\"anger}
\date{}
\begin{document}
\footnotetext[1]{Support of the research by \"OAD, project CZ~02/2019, and support of the research of the first author by IGA, project P\v rF~2019~015, is gratefully acknowledged.}
\maketitle
\begin{abstract}
Since orthomodular posets serve as an algebraic axiomatization of the logic of quantum mechanics, it is a natural question how the connective of implication can be defined in this logic. It should be introduced in such a way that it is related with conjunction, i.e.\ with the partial operation meet, by means of some kind of adjointness. We present here such an implication for which a so-called unsharp residuated poset can be constructed. Then this implication is connected with the operation meet by the so-called unsharp adjointness. We prove that also conversely, under some additional assumptions, such an unsharp residuated poset can be converted into an orthomodular poset and that this assignment is nearly one-to-one. 
\end{abstract}
 
{\bf AMS Subject Classification:} 06C15, 06A11, 03G12, 03B47

{\bf Keywords:} Orthomodular poset, connective implication, unsharp adjointness, partial monoid, unsharp residuated poset

Orthomodular posets are considered as an algebraic axiomatization of the logic of quantum mechanics, see e.g.\ \cite B. At the very beginning of this theory, G.~Birkhoff and J.~von~Neumann as well as K.~Husimi considered orthomodular lattices for this reason but later on it was shown in accordance with physical theory and experiments that in this structure the existence of suprema is granted only for so-called orthogonal elements. Hence, the interest of researchers was shifted to orthomodular posets, however, orthomodular lattices remain still very important structures which have a number of interesting properties which motivate us also for analogical treaty for orthomodular posets.

On the other hand, when some algebraic structure is used as an axiomatization of a propositional logic, we must ask for a connective implication. Namely, implication is the most productive connective which enables deductive reasoning in the corresponding logic. In the case of orthomodular lattices, we usually consider the so-called Sasaki operation (see e.g.\ \cite B) for this reason. As pointed out in \cite{GJKO}, a connective implication is ``good'' if it satisfies the so-called adjointness with respect to conjunction. In other words, we need to relate our structure with a residuated one. The authors showed in \cite{CL17a} and \cite{CL17b} how implication in orthomodular lattices is derived by the Sasaki operation in order to obtain a so-called left-residuated l-groupoid. Hence, orthomodular lattices can be really considered as an axiomatization of some reasonable logic connected with the logic of quantum mechanics.

In the present paper we solve the question of finding an implication in orthomodular posets in the way that a certain residuation is possible. Because the operation meet in orthomodular posets is only partial, one cannot expect that the resulting implication will be ``sharp'', i.e.\ its values will not be elements but subsets. A similar approach was already used by the authors also for another so-called quantum structure, namely for effect algebras which are also only partial algebras. Effect algebras describe the behavior of effects in the event-state systems of the logic of quantum mechanics, see \cite{CL2}.

Although our residuated structure corresponding to an orthomodular poset can be recognized to be rather complicated, we show that also conversely, every such a structure can be transformed into an orthomodular poset.

Recall that a {\em bounded poset with an antitone involution} is an ordered quintuple $(P,\leq,{}',0,1)$ where $(P,\leq,0,1)$ is a bounded poset and $'$ is a unary operation on $P$ such that the following conditions are satisfied for all $x,y\in P$:
\begin{itemize}
\item $x\leq y$ implies $y'\leq x'$,
\item $(x')'=x$.
\end{itemize}
We say that the elements $a,b$ of $P$ are {\em orthogonal} to each other if $a\leq b'$ (or, equivalently, $b\leq a'$).

Further recall that an {\em orthomodular poset} is a bounded poset $(P,\leq,{}',0,1)$ with an antitone involution satisfying the following conditions for all $x,y\in P$:
\begin{itemize}
\item $x\vee y$ is defined provided $x\leq y'$,
\item $x\leq y$ implies $y=x\vee(y\wedge x')$.
\end{itemize}
The last condition is called the {\em orthomodular law}. Observe that in case $y=1$ this law implies $x\vee x'=1$. Since $'$ is an antitone involution this further implies $x\wedge x'=0$. Thus $'$ is a complementation.

Note that in case $x\leq y$ the expression $x\vee(y\wedge x')$ is well-defined. Of course, $x\vee y$ may exist also for elements $x$ and $y$ which are not orthogonal to each other. For example, if $x\leq y$ then $x\vee y$ exists and equals $y$. Because every poset with an antitone involution satisfies the De Morgan laws, we have that $x\wedge y$ exists provided $x'\leq y$.

Let $(P,\leq)$ be a poset, $a,b\in P$ and $A,B\subseteq P$. Put
\begin{align*}
L(A) & :=\{x\in P\mid x\leq y\text{ for all }y\in A\}, \\
U(A) & :=\{x\in P\mid y\leq x\text{ for all }y\in A\}.
\end{align*}
Instead of $L(A\cup B)$, $L(A\cup\{a\})$, $L(\{a,b\})$ and $U(L(A))$ we simply write $L(A,B)$, $L(A,a)$, $L(a,b)$ and $UL(A)$, respectively. Moreover, $A\leq B$ should mean that $x\leq y$ for all $x\in A$ and $y\in B$. Instead of $\{a\}\leq A$ and $A\leq\{a\}$ we simply write $a\leq A$ and $A\leq a$, respectively.

Let $(P,\leq,{}',0,1)$ be an orthomodular poset and $A\subseteq P$. Then we put $A':=\{x'\mid x\in A\}$. Moreover $A\vee a:=\{x\vee a\mid x\in A\}$ is defined if $A\leq a'$ or, equivalently, $a\leq A'$. Analogous statements hold for meet.

\begin{lemma}\label{lem1}
Let $(P,\leq,{}',0,1)$ be an orthomodular poset and $a,b\in P$. Then
\begin{align*}
U(a,b) & =a\vee(U(a,b)\wedge a'), \\
L(a,b) & =b\wedge(L(a,b)\vee b').
\end{align*}
\end{lemma}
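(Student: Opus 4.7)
The plan is to prove both identities pointwise, by showing that for every $x\in U(a,b)$ the element $a\vee(x\wedge a')$ is defined and equals $x$, and dually for every $y\in L(a,b)$ that $b\wedge(y\vee b')$ is defined and equals $y$. Since the sets $a\vee(U(a,b)\wedge a')$ and $b\wedge(L(a,b)\vee b')$ are by definition the images of $U(a,b)$ and $L(a,b)$ under these operations, pointwise equality immediately yields set equality.

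For the first identity I would fix an arbitrary $x\in U(a,b)$. Then $a\leq x$, hence $x'\leq a'$, so by the remark preceding the lemma the meet $x\wedge a'$ is defined. Since $x\wedge a'\leq a'$, the join $a\vee(x\wedge a')$ is defined as well. The orthomodular law applied to $a\leq x$ then gives exactly $x=a\vee(x\wedge a')$. This already shows $x\in a\vee(U(a,b)\wedge a')$, and conversely every element of $a\vee(U(a,b)\wedge a')$ arises this way, so the two sets coincide.

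For the second identity I would dualize using the antitone involution. Fix $y\in L(a,b)$, so $y\leq b$ and hence $b'\leq y'$. By the orthomodular law applied to $b'\leq y'$, one has $y'=b'\vee(y'\wedge b)$. Taking complements and invoking the De Morgan laws (which hold in any bounded poset with an antitone involution, as noted in the text) yields $y=b\wedge(y\vee b')$, after checking that $y\vee b'$ is defined because $y\leq b=(b')'$ and that the subsequent meet with $b$ is defined because $b'\leq y\vee b'$. As in the first case, this pointwise identity upgrades to equality of sets.

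The argument is essentially a direct reading of the orthomodular law, so there is no substantial obstacle; the only thing to be careful about is verifying existence of the partial meets and joins at each step via the two criteria recorded in the introduction ($x\vee y$ exists when $x\leq y'$, and $x\wedge y$ exists when $x'\leq y$). Once these bookkeeping checks are in place, both halves of the lemma reduce to a single application of the orthomodular law (directly for $U(a,b)$, and via the antitone involution for $L(a,b)$).
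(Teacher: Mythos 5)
Your proof is correct and follows essentially the same route as the paper: the first identity is a direct application of the orthomodular law to $a\leq x$ for $x\in U(a,b)$, and the second is obtained by applying the orthomodular law to $b'\leq y'$ and dualizing via the antitone involution and De Morgan laws, exactly as in the paper's computation $b\wedge(d\vee b')=(b'\vee(d'\wedge b))'=d$. Your explicit bookkeeping of where the partial joins and meets are defined matches the paper's implicit use of the same existence criteria, so there is nothing to add.
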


\begin{proof}
Let $c\in U(a,b)$ and $d\in L(a,b)$. Since $'$ is an antitone involution of $(P,\leq)$, we have
\begin{align*}
  (x\vee y)' & =x'\wedge y'\text{ for all }x,y\in P\text{ with }x\leq y', \\
(x\wedge y)' & =x'\vee y'\text{ for all }x,y\in P\text{ with }x'\leq y.
\end{align*}
Since $a\leq c$ and $c\wedge a'\leq a'$, the expression $a\vee(c\wedge a')$ is defined and $a\vee(c\wedge a')=c$. Since $d\leq b$ and $b'\leq d\vee b'$, the expression $b\wedge(d\vee b')$ is defined and
\[
b\wedge(d\vee b')=(b'\vee(d'\wedge b))'=(d')'=d,
\]
thus the assertion is evident.
\end{proof}

In an orthomodular lattice $\mathbf L$, the connective implication can be defined in six different ways which coincide with $x'\vee y$ in the case when $\mathbf L$ becomes a Boolean algebra. However, in order to show that it is an adjoint of conjunction, only the following one
\[
x\rightarrow y:=x'\vee(x\wedge y)
\]
is suitable, see \cite{CL17b}. In analogy to this formula, we can define the connective implication in orthomodular posets $\mathbf P=(P,\leq,{}',0,1)$ as follows:
\[
x\rightarrow y:=x'\vee L(x,y).
\]
Since $L(x,y)\leq x$, this expression is everywhere defined. Of course, the results of this implication are subsets of $P$ instead of elements of $P$ as in orthomodular lattices. However, if implication is defined in this way then it has some nice and expected properties as presented in the following lemma.

\begin{lemma}
Let $\mathbf P=(P,\leq,{}',0,1)$ be an orthomodular poset and $x\rightarrow y:=x'\vee L(x,y)$ for all $x,y\in P$. Then
\begin{enumerate}[{\rm(i)}]
\item $x\rightarrow0\approx\{x'\}$,
\item $1\rightarrow x\approx L(x)$,
\item if $x\leq y$ then $x\rightarrow y=[x',1]$,
\item if $x\leq y$ then $U(x\rightarrow y)=\{1\}$,
\item $x\rightarrow x'=\{x'\}$
\end{enumerate}
{\rm(}$x,y\in P${\rm)}.
\end{lemma}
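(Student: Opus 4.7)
Each of (i)--(v) is obtained by directly expanding the definition $x\to y=\{x'\vee z:z\in L(x,y)\}$, the join being everywhere defined since $L(x,y)\leq x=(x')'$. The only substantive step is (iii); (iv) falls out of it, and (i), (ii), (v) are bookkeeping.

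For the bookkeeping parts I would first observe that $L(x,0)=\{0\}$, so (i) reads $x\to 0=\{x'\vee 0\}=\{x'\}$. For (ii), $L(1,x)=L(x)$ and $1'=0$; each $z\in L(x)$ satisfies $0\vee z=z$, giving $1\to x=L(x)$. For (v) I would invoke the complementation property $x\wedge x'=0$ noted in the excerpt to reduce $L(x,x')$ to $L(0)=\{0\}$, whence $x\to x'=\{x'\vee 0\}=\{x'\}$.

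The main case is (iii). Assume $x\leq y$; then $L(x,y)=L(x)$, so $x\to y=\{x'\vee z:z\in L(x)\}$. The inclusion $x\to y\subseteq[x',1]$ is immediate since each $x'\vee z$ lies between $x'$ and $1$. For the reverse inclusion, given $w\in[x',1]$, I would produce the preimage $z:=w\wedge x$. This meet exists by the De Morgan remark in the excerpt: $w\wedge x$ is defined whenever $w'\leq x$, and $x'\leq w$ delivers exactly $w'\leq x''=x$. Clearly $z\leq x$, so $z\in L(x)$, and the orthomodular law applied to $x'\leq w$ yields $w=x'\vee(w\wedge x'')=x'\vee z$, as required. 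Hitting every $w\in[x',1]$ on the nose is the one nontrivial move, and it is the sole place where orthomodularity enters. Finally, (iv) is an immediate corollary of (iii), since $1\in[x',1]$ forces $U([x',1])=\{1\}$.
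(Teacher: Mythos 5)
Your proof is correct and takes essentially the same approach as the paper: (i), (ii), (v) by direct expansion of the definition, (iii) by producing the preimage $w\wedge x$ for each $w\in[x',1]$ via the orthomodular law (the paper writes this step tersely as $c=a'\vee(c\wedge a)$), and (iv) as an immediate corollary of (iii). Your explicit check that the meet $w\wedge x$ exists when $x'\leq w$ is a detail the paper leaves implicit, but otherwise the arguments coincide.
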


\begin{proof}
Let $a,b\in P$.
\begin{enumerate}[(i)]
\item We have $x\rightarrow0\approx x'\vee L(x,0)\approx x'\vee\{0\}\approx\{x'\}$.
\item We have $1\rightarrow x\approx1'\vee L(1,x)\approx0\vee L(x)\approx L(x)$.
\item Assume $a\leq b$. Then $a\rightarrow b=a'\vee L(a,b)=a'\vee L(a)\subseteq[a',1]$. Conversely, if $c\in[a',1]$ then $c=a'\vee(c\wedge a)\in a'\vee L(a)=a\rightarrow b$.
\item If $a\leq b$ then by (iii) we have $U(a\rightarrow b)=U([a',1])=\{1\}$.
\item We have $x\rightarrow x'\approx x'\vee L(x,x')\approx x'\vee\{0\}\approx\{x'\}$.
\end{enumerate}
\end{proof}

We are going to show that this implication is related with conjunction by means of unsharp residuation.

Recall that a {\em partial commutative monoid} is a partial algebra $(A,\odot,1)$ of type $(2,1)$ satisfying the following conditions for all $x,y,z\in A$:
\begin{itemize}
\item if $(x\odot y)\odot z$ and $x\odot(y\odot z)$ are defined then they coincide,
\item $x\odot1\approx1\odot x\approx x$,
\item $x\odot y$ is defined if and only if so is $y\odot x$ and in this case $x\odot y=y\odot x$.
\end{itemize}

If $(P,\leq,{}',0,1)$ is an orthomodular poset and we consider the partial operation $\wedge$ on $P$ then clearly $(P,\wedge,1)$ is a partial commutative monoid by the previous definition. It is worth noticing that it can happen that e.g.\ $(x\wedge y)\wedge z$ is defined, but $x\wedge(y\wedge z)$ is not, see the following example.

\begin{example}
Consider the orthomodular poset $(P,\subseteq,{}',\emptyset,\{1,\ldots,6\})$ where $P$ denotes the set of all subsets of $\{1,\ldots,6\}$ of even cardinality. Then $(\{1,2\}\wedge\{1,2,3,4\})\wedge\{2,3,4,5\}$ exists, namely
\[
(\{1,2\}\wedge\{1,2,3,4\})\wedge\{2,3,4,5\}=\{1,2\}\wedge\{2,3,4,5\}=\emptyset,
\]
but $\{1,2,3,4\}\wedge\{2,3,4,5\}$ and hence $\{1,2\}\wedge(\{1,2,3,4\}\wedge\{2,3,4,5\})$ do not exist.
\end{example}

Now we are ready to define our main concept.

\begin{definition}
An {\em unsharp residuated poset} is an ordered seventuple $\mathbf R=(R,\leq,\odot,\rightarrow,{}',0,1)$ where $\rightarrow:R^2\rightarrow2^R$ and the following hold for all $x,y,z\in R$:
\begin{enumerate}[{\rm(R1)}]
\item $(R,\leq,{}',0,1)$ is a bounded poset with an antitone involution,
\item $(R,\odot,1)$ is a partial commutative monoid where $x\odot y$ is defined whenever $x'\leq y$ and where $x\odot z\leq y\odot z$ whenever $x\leq y$ and $x\odot z$ and $y\odot z$ are defined,
\item $U(x,y')\odot y\subseteq UL(y,z)$ if and only if $U(x,y')\subseteq U(y\rightarrow z)$,
\item $x\leq y$ implies $U(x'\rightarrow y)=U(y)$.
\end{enumerate}
Condition {\rm(R3)} is called {\em unsharp adjointness}. The unsharp residuated poset $\mathbf R$ is called {\em divisible} if it satisfies the identity $x\odot(x\rightarrow y)\approx L(x,y)$, and it is called {\em idempotent} if $x\odot x$ is defined for every $x\in R$ and $x\odot x=x$.
\end{definition}

Note that because of $y'\leq U(x,y')$ the expression $U(x,y')\odot y$ is everywhere defined. Here $y'\leq U(x,y')$ means that $y'\leq z$ for all $z\in U(x,y')$, and $U(x,y')\odot y$ denotes the set $\{z\odot y\mid z\in U(x,y')\}$. Analogously, we proceed in similar cases, e.g.\ $L(x,y)\leq x$, thus $x\rightarrow y=x'\vee L(x,y)$ is everywhere defined.

We are able to show that the partial operations $\odot$ and $\wedge$ coincide provided $\mathbf R$ is idempotent. The precise formulation is as follows.

\begin{lemma}\label{lem2}
Let $(R,\leq,\odot,\rightarrow,{}',0,1)$ be an idempotent unsharp residuated poset and $a,b\in R$ and assume that $a\odot b$ as well as $a\wedge b$ are defined. Then $a\odot b=a\wedge b$.
\end{lemma}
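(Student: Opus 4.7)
The strategy is to show the two inequalities $a\odot b\leq a\wedge b$ and $a\wedge b\leq a\odot b$ separately, with monotonicity of $\odot$ (from (R2)), the identity law $x\odot1=x$, and the idempotence hypothesis $x\odot x=x$ as the main tools.

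For the first inequality I would argue directly from (R2). Since $b\leq1$ and both $a\odot b$ and $a\odot1=a$ are defined, monotonicity (in the second slot via commutativity) gives $a\odot b\leq a\odot1=a$; symmetrically $a\odot b\leq b$. Hence $a\odot b\in L(a,b)$, and because $a\wedge b$ is by hypothesis the largest element of $L(a,b)$, we conclude $a\odot b\leq a\wedge b$.

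For the reverse inequality, set $c:=a\wedge b$. Idempotence supplies $c\odot c=c$, which is in the domain of $\odot$. The plan is to insert the bridge product $c\odot b$ and chain monotonicity twice: from $c\leq b$ together with commutativity one gets $c=c\odot c\leq c\odot b$, and from $c\leq a$ one gets $c\odot b\leq a\odot b$, yielding $c\leq a\odot b$ as required.

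The main obstacle is to establish that the bridge product $c\odot b=(a\wedge b)\odot b$ actually lies in the domain of $\odot$; neither the partial commutative monoid axioms nor (R2) guarantee this a priori, since (R2) forces definedness only when $x'\leq y$ and idempotence only supplies the diagonal pairs. I expect to clear this obstacle by combining associativity with the idempotence $b\odot b=b$: the defined product $a\odot b$ can be rewritten as $a\odot(b\odot b)$, and pushing this through associativity together with the already-established fact $a\odot b\leq c$ should transfer definedness down to the pair $(c,b)$. Once $c\odot b$ is shown to be in the domain, the two monotonicity applications above fit together to give $a\wedge b\leq a\odot b$, which, combined with the first inequality, yields the equality $a\odot b=a\wedge b$.
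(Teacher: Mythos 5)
Your first inequality is exactly the paper's: from $b\leq1$ and $a\leq1$, the monotonicity clause of (R2) gives $a\odot b\leq a\odot1=a$ and $a\odot b\leq1\odot b=b$, whence $a\odot b\leq a\wedge b$ since $a\wedge b$ exists. The skeleton of your second inequality also matches the paper, which finishes in a single step,
\[
a\wedge b=(a\wedge b)\odot(a\wedge b)\leq a\odot b,
\]
reading the monotonicity of (R2) as applicable simultaneously in both coordinates (from $a\wedge b\leq a$ and $a\wedge b\leq b$, both products being defined --- the left one by idempotence, the right one by hypothesis). On that reading no intermediate product is needed, and your ``main obstacle'' never arises.

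The genuine gap is your proposed fix for that obstacle. The associativity axiom of a partial commutative monoid has the Kleene form ``if $(x\odot y)\odot z$ and $x\odot(y\odot z)$ are \emph{both} defined, then they coincide''; it never \emph{creates} definedness, so it cannot ``transfer definedness down to the pair $(c,b)$''. Worse, the rewriting $a\odot b=a\odot(b\odot b)$ is vacuous: by idempotence $b\odot b$ is literally the element $b$, so no new grouping has been produced to feed into associativity. And the already-established inequality $a\odot b\leq c$ carries no definedness information either, because the domain of $\odot$ is not order-closed: the only definedness guarantees available are $x\odot y$ for $x'\leq y$, the diagonal $x\odot x$, and products with $1$, and $c'=(a\wedge b)'\leq b$ fails in general. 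So as written your argument for $a\wedge b\leq a\odot b$ does not close. To be fair, you have put your finger on a real subtlety: the letter of (R2) states monotonicity with the \emph{same} second factor $z$ in both products, and chaining through a bridge such as $c\odot b$ or $a\odot c$ would indeed require that bridge to be defined; the paper silently uses the two-coordinate form of monotonicity, under which your bridge is unnecessary and your proof reduces verbatim to the paper's. If instead you insist on the literal one-coordinate reading, neither your patch nor any associativity manipulation supplies the missing definedness.
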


\begin{proof}
We have
\[
a\odot b=(a\odot b)\wedge(a\odot b)\leq(a\odot1)\wedge(1\odot b)=a\wedge b=(a\wedge b)\odot(a\wedge b)\leq a\odot b.
\]
\end{proof}

Let us remark that in the definition of unsharp adjointness we have an additional element $y$ in the term $L(y,z)$ on the right-hand side of the first inequality and an additional element $y'$ in the term $U(x,y')$ on the left-hand side of both inequalities. Such an approach was already used by the authors in \cite{CL1} under the name ``relative adjointness''.

Since for two subsets $A,B$ of a poset $(P,\leq)$, $A\subseteq U(B)$ is equivalent to $A\geq B$, we have that unsharp adjointness (R3) is equivalent to the following {\em dual unsharp adjointness}:
\begin{enumerate}
\item[(R3')] $U(x,y')\odot y\geq L(y,z)$ if and only if $U(x,y')\geq y\rightarrow z$.
\end{enumerate}

An example of an unsharp residuated poset is shown in the following

\begin{example}
Let $R$ denote the six-element set $\{0,a,a',b,b',1\}$ and put $\mathbf R:=(P,\leq,\odot,\rightarrow,{}',0,1)$ where $a,a',b,b'$ are atoms as well as coatoms of $(P,\leq)$ and where the operations $\odot$, $\rightarrow$ and $'$ are given by the following tables:
\[
\begin{array}{l|cccccl}
\odot & 0 & a & a' & b & b' & 1 \\
\hline
  0   & 0 & 0 & 0  & 0 & 0  & 0 \\
  a   & 0 & a & 0  & 0 & 0  & a \\
  a'  & 0 & 0 & a' & 0 & 0  & a' \\
  b   & 0 & 0 & 0  & b & 0  & b \\
  b'  & 0 & 0 & 0  & 0 & b' & b' \\
  1   & 0 & a & a' & b & b' & 1
\end{array}
\quad
\begin{array}{l|lccccc}
\rightarrow & 0 & a & a' & b & b' & 1 \\
\hline
0  & \{1\}  &  \{1\}   &  \{1\}   &  \{1\}   &  \{1\}   &  \{1\} \\
a  & \{a'\} & \{a',1\} &  \{a'\}  &  \{a'\}  &  \{a'\}  & \{a',1\} \\
a' & \{a\}  &  \{a\}   & \{a,1\}  &  \{a\}   &  \{a\}   & \{a,1\} \\
b  & \{b'\} &  \{b'\}  &  \{b'\}  & \{b',1\} &  \{b'\}  & \{b',1\} \\
b' & \{b\}  &  \{b\}   &  \{b\}   &  \{b\}   & \{b,1\}  & \{b,1\} \\
1  & \{0\}  & \{0,a\}  & \{0,a'\} & \{0,b\}  & \{0,b'\} &    P \\
\end{array}
\quad
\begin{array}{l|l}
x & x' \\
\hline
0 & 1 \\
a & a' \\
a' & a \\
b & b' \\
b' & b \\
1 & 0
\end{array}
\]
It is an easy exercise to check that $\mathbf R$ is a divisible idempotent unsharp residuated poset.
\end{example}

Using our concept of an unsharp residuated poset, we can prove the following conversion of an orthomodular poset into this kind of residuated poset.

\begin{theorem}\label{th1}
Let $\mathbf P=(P,\leq,{}',0,1)$ be an orthomodular poset and put
\begin{align*}
      x\odot y & :=x\wedge y\text{ whenever }x\wedge y\text{ is defined}, \\
x\rightarrow y & :=x'\vee L(x,y)
\end{align*}
{\rm(}$x,y\in P${\rm)}. Then $\mathbb R(\mathbf P):=(P,\leq,\odot,\rightarrow,{}',0,1)$ is a divisible idempotent unsharp residuated poset.
\end{theorem}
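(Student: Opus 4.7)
The proof is a straightforward verification of the six requirements (R1)--(R4), idempotence, and divisibility. Axiom (R1) holds by hypothesis on $\mathbf P$. For (R2), commutativity, unitarity, and associativity of $\odot=\wedge$ are immediate: when both $(x\wedge y)\wedge z$ and $x\wedge(y\wedge z)$ are defined, both equal the infimum of $\{x,y,z\}$. The condition that $x\odot y$ be defined whenever $x'\leq y$ is exactly the De Morgan clause recalled in the preamble; monotonicity of $\wedge$ in each argument is immediate from the universal property of the meet. Idempotence is trivial from $x\wedge x=x$.

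For divisibility I would first record the dual orthomodular law: if $t\leq x$ then $t=x\wedge(x'\vee t)$. This follows by applying the orthomodular law to $x'\leq t'$, which gives $t'=x'\vee(t'\wedge x)$, and taking De Morgan complements. Then for each $t\in L(x,y)$ the join $x'\vee t$ is defined (since $t\leq x$, so $x'\leq t'$) and $x\odot(x'\vee t)=x\wedge(x'\vee t)=t$, whence $x\odot(x\rightarrow y)=L(x,y)$. For (R4), write $x'\rightarrow y=x\vee L(x',y)$. Under the hypothesis $x\leq y$ the meet $x'\wedge y$ exists (because $x\leq y$), and the orthomodular law yields $x\vee t\leq x\vee(x'\wedge y)=y$ for every $t\in L(x',y)$, so $y\in U(x'\rightarrow y)$ and therefore $U(y)\subseteq U(x'\rightarrow y)$. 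The reverse inclusion is obtained by specializing to $t:=x'\wedge y\in L(x',y)$, for which $x\vee(x'\wedge y)=y$, so every $w\in U(x'\rightarrow y)$ dominates $y$.

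The substantive step is unsharp adjointness (R3). Fix $x,y,z\in P$. For each $w\in U(x,y')$ the inequality $y'\leq w$ guarantees that $w\wedge y$ is defined, and for each $t\in L(y,z)$ the inequality $t\leq y$ guarantees that $y'\vee t$ is defined. Assuming the left-hand condition, any $w\in U(x,y')$ satisfies $w\geq w\wedge y\geq t$ for all $t\in L(y,z)$; together with $y'\leq w$ this gives $y'\vee t\leq w$, i.e.\ $w\in U(y\rightarrow z)$. Conversely, if $U(x,y')\subseteq U(y\rightarrow z)$, then for $w\in U(x,y')$ and $t\in L(y,z)$ we have $w\geq y'\vee t\geq t$ and $y\geq t$, hence $w\wedge y\geq t$, i.e.\ $w\wedge y\in UL(y,z)$. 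The only real obstacle is bookkeeping of where joins and meets are defined; once this is in hand, the element-chase is routine and the dual orthomodular law carries the weight of the divisibility clause.
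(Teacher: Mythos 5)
Your proof is correct, and its treatment of (R1), (R2), idempotence, (R4) and divisibility matches the paper's in substance: your ``dual orthomodular law'' $t\leq x\Rightarrow t=x\wedge(x'\vee t)$ is exactly the second identity of the paper's Lemma~\ref{lem1}, rederived inline by the same complementation trick $(b'\vee(d'\wedge b))'=d$, and your (R4) argument via $x\vee(x'\wedge y)=y$ is the paper's. Where you genuinely diverge is the core step (R3). The paper proves \emph{both} directions of unsharp adjointness through Lemma~\ref{lem1}: the forward direction uses the orthomodular decomposition $U(a,b')=b'\vee(U(a,b')\wedge b)$, and the reverse uses $(b'\vee L(b,c))\wedge b=L(b,c)$, so orthomodularity is invoked twice. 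You instead use only the universal properties of the (guaranteed-to-exist) meets and joins: forward, from $w\geq y'$ and $w\geq w\wedge y\geq t$ you conclude $w\geq y'\vee t$ because the orthogonal join $y'\vee t$ is a \emph{least} upper bound; reverse, from $w\geq y'\vee t\geq t$ and $y\geq t$ you conclude $w\wedge y\geq t$ because $w\wedge y$ (defined since $y'\leq w$) is a \emph{greatest} lower bound. This is a real simplification with a payoff the paper's route obscures: your argument shows that (R3) holds in any bounded poset with antitone involution in which orthogonal joins exist, i.e.\ the orthomodular law is needed only for (R4) and for divisibility, not for unsharp adjointness itself. What the paper's approach buys in exchange is economy of tooling: Lemma~\ref{lem1} is a single reusable device that disposes of (R3) and divisibility uniformly. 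The only cosmetic nitpick is in your (R4): from ``$x\vee t\leq y$ for every $t\in L(x',y)$'' the inclusion $U(y)\subseteq U(x'\rightarrow y)$ follows because \emph{every} element of $x'\rightarrow y$ lies below $y$, which is what you show, rather than from the single membership $y\in U(x'\rightarrow y)$; the content is there, only the ``therefore'' is worded backwards.
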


Note that because of $L(x,y)\leq x$ the expression $x'\vee L(x,y)$ is everywhere defined.

\begin{proof}[Proof of Theorem~\ref{th1}]
Let $a,b,c\in E$. Condition (R1) and the first part of condition (R2) are obvious, and the second part of condition (R2) follows directly by definition.
\begin{enumerate}
\item[(R3)] If $U(a,b')\odot b\subseteq UL(b,c)$ then $U(a,b')\wedge b\subseteq UL(b,c)$ and, by Lemma~\ref{lem1},
\[
U(a,b')=b'\vee(U(a,b')\wedge b)\subseteq U(b'\vee L(b,c))=U(b\rightarrow c).
\]
If, conversely, $U(a,b')\subseteq U(b\rightarrow c)$ then $U(a,b')\subseteq U(b'\vee L(b,c))$ and, using Lemma~\ref{lem1} 1 once more,
\[
U(a,b')\odot b=U(a,b')\wedge b\subseteq U((b'\vee L(b,c))\wedge b)=UL(b,c).
\]
\item[(R4)] If $a\leq b$ then $a'\wedge b$ and $a\vee(a'\wedge b)$ are defined and, by the orthomodular law,
\[
U(a'\rightarrow b)=U(a\vee L(a',b))=U(a\vee L(a'\wedge b))=U(a\vee(a'\wedge b))=U(b).
\]
\end{enumerate}
Since $x\wedge x=x$ holds for all $x\in P$, $\mathbb R(\mathbf P)$ is idempotent. Finally, by Lemma~\ref{lem1} we have $a\odot(a\rightarrow b)=a\wedge(a'\vee L(a,b))=L(a,b)$ showing divisibility of $\mathbb R(\mathbf P)$.
\end{proof}

Note that $\mathbb R(\mathbf P)$ satisfies the identity $x\odot0\approx0\odot x\approx0$.

That the concept of an unsharp residuated poset was chosen appropriate is witnessed by the fact that also conversely, under some additional assumptions, such a poset can be organized into an orthomodular poset.

\begin{theorem}\label{th2}
Let $\mathbf R=(R,\leq,\odot,\rightarrow,{}',0,1)$ be an idempotent unsharp residuated poset satisfying the following conditions for all $x,y\in R$:
\begin{enumerate}[{\rm(i)}]
\item If $x'\leq y$ then $x\wedge y$ is defined,
\item $x\rightarrow y=x'\vee L(x,y)$.
\end{enumerate}
Then $\mathbb P(\mathbf R):=(R,\leq,{}',0,1)$ is an orthomodular poset.
\end{theorem}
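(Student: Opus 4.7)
Axiom (R1) already gives that $(R,\leq,{}',0,1)$ is a bounded poset with antitone involution. To upgrade it to an orthomodular poset, two further things need to be verified: first, that $x\vee y$ is defined whenever $x\leq y'$; second, the orthomodular law $y=x\vee(y\wedge x')$ whenever $x\leq y$. The plan is to derive the first from hypothesis (i) by De Morgan duality, and the second from axiom (R4) together with hypothesis (ii). Idempotence of $\mathbf R$ will play no role.

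For the first condition, I will invoke the standard observation that in any bounded poset with antitone involution, the map $'$ is an order-reversing bijection, hence induces a bijection between upper bounds of $\{x,y\}$ and lower bounds of $\{x',y'\}$; consequently $x\vee y$ is defined if and only if $x'\wedge y'$ is defined, and the two are then De Morgan duals. Now if $x\leq y'$, then $(x')'=x\leq y'$, so hypothesis (i) yields that $x'\wedge y'$ exists, and therefore $x\vee y$ does too.

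For the orthomodular law, fix $x\leq y$. First I would check that the relevant partial operations are defined: hypothesis (i) applied to $(x')'=x\leq y$ shows that $x'\wedge y$ is defined, and then the previous paragraph shows that $x\vee(x'\wedge y)$ is defined (since $x'\wedge y\leq x'$, i.e.\ $x\leq(x'\wedge y)'$). Because $x'\wedge y$ exists, $L(x',y)=L(x'\wedge y)$, so by hypothesis (ii),
\[
x'\rightarrow y \;=\; x\vee L(x',y) \;=\; \{x\vee z \mid z\in L(x',y)\}.
\]
Each $x\vee z$ in this set is defined since $z\leq x'$, and is bounded above by $x\vee(x'\wedge y)$ by elementary monotonicity of join; moreover $x'\wedge y$ itself lies in $L(x',y)$, so $x\vee(x'\wedge y)$ is the maximum of the set $x'\rightarrow y$. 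Hence $U(x'\rightarrow y)=U(x\vee(x'\wedge y))$. Applying axiom (R4) to $x\leq y$ then yields $U(x\vee(x'\wedge y))=U(y)$, and this equality of principal filters forces $x\vee(x'\wedge y)=y$, which is the orthomodular law.

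The main (modest) obstacle is converting the set-level identity $U(x'\rightarrow y)=U(y)$ supplied by (R4) into an equation of elements, and this succeeds precisely because $x\vee L(x',y)$ has a maximum, namely $x\vee(x'\wedge y)$, whose existence is secured by hypothesis (i). Everything else is routine bookkeeping about the domains of the partial operations and the De Morgan correspondence.
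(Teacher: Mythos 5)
Your proof is correct and takes essentially the same route as the paper's: joins of orthogonal elements come from hypothesis (i) via De Morgan duality, and the orthomodular law comes from (R4) combined with (ii), through the chain $U(x'\rightarrow y)=U(x\vee L(x',y))=U(x\vee L(x'\wedge y))=U(x\vee(x'\wedge y))=U(y)$, with your max-of-the-set observation making explicit why the middle equalities hold. Your side remark that idempotence plays no role is accurate: the paper's proof does open by invoking Lemma~2 (which needs idempotence) to identify $\odot$ with $\wedge$ on orthogonal pairs, but that fact is never used in the rest of its argument either --- it becomes relevant only in the later theorem showing $\mathbb R(\mathbb P(\mathbf R))$ recovers $\odot$.
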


\begin{proof}
Let $a,b\in R$. Since $\mathbf R$ is idempotent, by Lemma~\ref{lem2} we have that $a\wedge b=a\odot b$ whenever $a'\leq b$. If $a\leq b$ then $a'\wedge b$ and $a\vee(a'\wedge b)$ are defined and, using (R4), we obtain
\[
U(b)=U(a'\rightarrow b)=U(a\vee L(a',b))=U(a\vee L(a'\wedge b))=U(a\vee(a'\wedge b)),
\]
i.e.\ $b=a\vee(a'\wedge b)$ proving orthomodularity. As mentioned above, $'$ is a complementation. Thus $\mathbb P(\mathbf R)$ is an orthomodular poset.
\end{proof}

Every orthomodular poset is determined by its associated unsharp residuated poset as the following theorem shows.

\begin{theorem}
Let $\mathbf P=(P,\leq,{}',0,1)$ be an orthomodular poset. Then $\mathbb P(\mathbb R(\mathbf P))=\mathbf P$.
\end{theorem}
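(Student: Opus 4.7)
The claim is essentially a bookkeeping verification: the functor $\mathbb P$ merely forgets the operations $\odot$ and $\rightarrow$, while $\mathbb R$ adds them on the same underlying set without altering the order, the involution, or the constants. So the plan is to check that $\mathbb R(\mathbf P)$ meets the hypotheses of Theorem~\ref{th2}, then observe that stripping the added operations from $\mathbb R(\mathbf P)$ returns exactly the quintuple $\mathbf P$.

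First I would invoke Theorem~\ref{th1} to get that $\mathbb R(\mathbf P)=(P,\leq,\odot,\rightarrow,{}',0,1)$ is a divisible idempotent unsharp residuated poset. Next I would verify the two additional hypotheses (i) and (ii) of Theorem~\ref{th2}. For (ii), this is immediate, since in $\mathbb R(\mathbf P)$ the arrow is defined precisely as $x\rightarrow y:=x'\vee L(x,y)$. For (i), the excerpt has already noted that every bounded poset with an antitone involution satisfies the De~Morgan laws and that, consequently, in an orthomodular poset $x\wedge y$ exists whenever $x'\leq y$; so the partial meet inherited by $\mathbb R(\mathbf P)$ satisfies (i).

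Having verified the hypotheses, Theorem~\ref{th2} yields that $\mathbb P(\mathbb R(\mathbf P))=(P,\leq,{}',0,1)$ is an orthomodular poset. The final step is simply to observe that this is literally the same quintuple we started with: the underlying set $P$, the order $\leq$, the antitone involution ${}'$, and the bounds $0,1$ are common to $\mathbf P$, to $\mathbb R(\mathbf P)$, and to $\mathbb P(\mathbb R(\mathbf P))$, since the construction $\mathbb R$ only adjoins $\odot$ and $\rightarrow$ and the construction $\mathbb P$ only discards them. Hence $\mathbb P(\mathbb R(\mathbf P))=\mathbf P$.

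There is no real obstacle here; the substance of the result lives in Theorems~\ref{th1} and~\ref{th2}, and the only thing to watch is that the partial meet read off from $\mathbb R(\mathbf P)$ via Lemma~\ref{lem2} (namely $a\wedge b=a\odot b$ whenever $a'\leq b$) agrees with the original meet of $\mathbf P$. But this is automatic from the definition $x\odot y:=x\wedge y$ in $\mathbb R(\mathbf P)$, so the verification is essentially definitional.
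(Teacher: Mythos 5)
Your proposal is correct and follows essentially the same route as the paper: verify that $\mathbb R(\mathbf P)$ satisfies hypotheses (i) and (ii) of Theorem~\ref{th2} and then observe that the reduct $(P,\leq,{}',0,1)$ is literally the original structure. Your argument is just a more explicit version of the paper's (you spell out why (i) follows from the De~Morgan laws and (ii) from the definition of $\rightarrow$, where the paper simply asserts both conditions hold), so there is nothing to correct.
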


\begin{proof}
Let $(P,\leq,\odot,\rightarrow,{}',0,1)$ denote the unsharp residuated poset $\mathbb R(\mathbf P)$ associated to $\mathbf P$. Then $\mathbb R(\mathbf P)$ has the same operation $'$, the same ordering $\leq$ and the same elements $0$ and $1$ as $\mathbf P$ and the two conditions of Theorem~\ref{th2} are satisfied. Thus we obtain
\[
\mathbb P(\mathbb R(\mathbf P))=\mathbb P(P,\leq,\odot,\rightarrow,{}',0,1)=(P,\leq,{}',0,1)=\mathbf P.
\]
\end{proof}

Conversely, we can prove

\begin{theorem}
Let $\mathbf R=(R,\leq,\odot,\rightarrow,{}',0,1)$ be an idempotent unsharp residuated poset satisfying conditions {\rm(i)} and {\rm(ii)} of Theorem~\ref{th2} for all $x,y\in R$. Then $\mathbb R(\mathbb P(\mathbf R))=(R,\leq,\otimes,\rightarrow,{}',0,1)$ is an unsharp residuated poset where $x\otimes y=x\odot y$ provided $x'\leq y$ {\rm(}$x,y\in R${\rm)}.
\end{theorem}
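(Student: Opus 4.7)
The plan is to invoke Theorem~\ref{th2} followed by Theorem~\ref{th1}. By Theorem~\ref{th2}, the hypotheses on $\mathbf R$ guarantee that $\mathbb P(\mathbf R)=(R,\leq,{}',0,1)$ is an orthomodular poset; feeding this poset into the construction of Theorem~\ref{th1} then immediately yields that $\mathbb R(\mathbb P(\mathbf R))$ is a divisible idempotent unsharp residuated poset, so axioms (R1)--(R4) require no further verification. What remains is to check that the two operations produced by $\mathbb R(\mathbb P(\mathbf R))$ agree with the data originally supplied on $\mathbf R$: namely, that the new arrow coincides with the original $\rightarrow$, and that the new $\otimes$ agrees with the original $\odot$ on the domain where $x'\leq y$.

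For the arrow, Theorem~\ref{th1} assigns $x\rightarrow y:=x'\vee L(x,y)$, where $\vee$, $L$ and $'$ are computed in $\mathbb P(\mathbf R)$. Since $\mathbb P(\mathbf R)$ carries exactly the same underlying set, order and antitone involution as $\mathbf R$, the lower cone $L(x,y)$ and the set-join $x'\vee L(x,y)$ are literally the same as in $\mathbf R$. By hypothesis (ii), this set equals the original $x\rightarrow y$, so the two implications match pointwise.

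For the multiplication, Theorem~\ref{th1} defines $x\otimes y$ as the poset-meet $x\wedge y$ of $\mathbb P(\mathbf R)$ whenever that meet exists. Fix $x,y\in R$ with $x'\leq y$. Then $x\odot y$ is defined by the first clause of (R2), while hypothesis (i) ensures that $x\wedge y$ is defined in $\mathbf R$; since the posets agree this is the same $x\wedge y$ as in $\mathbb P(\mathbf R)$, so $x\otimes y$ is defined. Applying Lemma~\ref{lem2} to the idempotent unsharp residuated poset $\mathbf R$ yields $x\odot y=x\wedge y=x\otimes y$, as required. The proof is thus essentially a bookkeeping exercise, and the main (minor) point to keep in mind is that the meet, join, complement and cones in $\mathbb P(\mathbf R)$ are precisely those of $\mathbf R$, so the round-trip introduces no new order-theoretic content and the content of the theorem is concentrated in the appeals to Theorems~\ref{th2} and~\ref{th1} together with Lemma~\ref{lem2}.
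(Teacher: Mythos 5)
Your proposal is correct and follows exactly the paper's route: apply Theorem~\ref{th2} to obtain the orthomodular poset $\mathbb P(\mathbf R)$, apply Theorem~\ref{th1} to it, and then match the operations, using hypothesis (ii) for the arrow and hypothesis (i) together with (R2) and Lemma~\ref{lem2} to get $a\otimes b=a\wedge b=a\odot b$ when $a'\leq b$. You are in fact slightly more explicit than the paper, which leaves the appeal to Lemma~\ref{lem2} and the coincidence of the underlying order-theoretic data implicit.
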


\begin{proof}
Let $a,b\in R$. According to Theorem~\ref{th2}, $\mathbb P(\mathbf R)=(R,\leq,{}',0,1)$ is an orthomodular poset. Let $(R,\leq,\otimes,\Rightarrow,{}',0,1)$ denote the unsharp residuated poset $\mathbb R(\mathbb P(\mathbf R))$. Then
\begin{align*}
    a\otimes b & =a\wedge b=a\odot b\text{ provided }a'\leq b, \\
a\Rightarrow b & =a'\vee L(a,b)=a\rightarrow b.
\end{align*}
\end{proof}

Compliance with Ethical Standards: This study was funded by \"OAD, project CZ~02/2019, and, concerning the first author, by IGA, project P\v rF~2019~015. The authors declare that they have no conflict of interest. This article does not contain any studies with human participants or animals performed by any of the authors.

Authors' addresses:

Ivan Chajda \\
Palack\'y University Olomouc \\
Faculty of Science \\
Department of Algebra and Geometry \\
17.\ listopadu 12 \\
771 46 Olomouc \\
Czech Republic \\
ivan.chajda@upol.cz

Helmut L\"anger \\
TU Wien \\
Faculty of Mathematics and Geoinformation \\
Institute of Discrete Mathematics and Geometry \\
Wiedner Hauptstra\ss e 8-10 \\
1040 Vienna \\
Austria, and \\
Palack\'y University Olomouc \\
Faculty of Science \\
Department of Algebra and Geometry \\
17.\ listopadu 12 \\
771 46 Olomouc \\
Czech Republic \\
helmut.laenger@tuwien.ac.at

\begin{thebibliography}9
\bibitem B
L.~Beran, Orthomodular Lattices. Algebraic Approach. D.~Reidel, Dordrecht 1985. ISBN 90-277-1715-X.
\bibitem{CL17a}
I.~Chajda and H.~L\"anger, Residuation in orthomodular lattices. Topol.\ Algebra Appl.\ {\bf5} (2017), 1--5.
\bibitem{CL17b}
I.~Chajda and H.~L\"anger, Orthomodular lattices can be converted into left residuated l-groupoids. Miskolc Math.\ Notes {\bf18} (2017), 685--689.
\bibitem{CL1}
I.~Chajda and H.~L\"anger, Relatively residuated lattices and posets.	Math.\ Slovaca (submitted). http://arxiv.org/abs/1901.06664.
\bibitem{CL2}
I.~Chajda and H.~L\"anger, Unsharp residuation in effect algebras. J.\ Multiple-Valued Logic Soft Computing (submitted). http://arxiv.org/abs/1907.02738.
\bibitem{GJKO}
N.~Galatos, P.~Jipsen, T.~Kowalski and H.~Ono, Residuated Lattices: An Algebraic Glimpse at Substructural Logics, Elsevier, Amsterdam 2007. ISBN 978-0-444-52141-5.
\end{thebibliography}
\end{document}